\def\disp{\displaystyle}
\newcommand{\R}{\mathbb{R}}
\newcommand{\dx}{\partial_x}
\newcommand{\dy}{\partial_y}
\newcommand{\va}{{\varphi}}
\newcommand{\eps}{\varepsilon}
\begin{document}
 \title{Global weak solutions to inviscid Burgers-Vlasov equations \thanks{Received date, and accepted date (The correct dates will be entered by the editor).}}


          \author{Huimin Yu\thanks{Department of Mathematics, Shandong Normal University, 250300, No.1 Daxue Road, Jinan, China, (hmyu@amss.ac.cn).}
          \and Wentao Cao \thanks{Corresponding author, Institute f\"{u}r mathematik, Universit\"{a}t Leipzig, D-04109, Augustplatz 10, Leipzig, Germany, (wentao.cao@math.uni-leipzig.de)}}

         \pagestyle{myheadings} \markboth{Burgers-Vlasov equations}{H. Yu and W. Cao} \maketitle

          \begin{abstract}
               In this paper, we consider the existence of global weak solutions to a one dimensional fluid-particles interaction model: inviscid Burgers-Vlasov equations with fluid velocity in $L^\infty$ and  particles' probability density in $L^1$. Our weak solution is also an entropy solution to inviscid Burgers' equation. The approach is adding ingeniously artificial viscosity to  construct approximate solutions satisfying $L^\infty$ compensated compactness framework and weak $L^1$ compactness framework.  It is worthy to be pointed out that the bounds of fluid velocity and the kinetic energy of particles' probability density are both independent of time.
          \end{abstract}
\begin{keywords}  weak solution; fluid-particles interaction; $L^\infty$ velocity; $L^1$ density; compensated compactness; Dunford-Pettis theroem.
\end{keywords}

 \begin{AMS} 76T10, 35F20, 35Q35, 35Q72,  45K05, 82D05.
\end{AMS}

%
\section{Introduction}
%

We consider the following inviscid Burgers-Vlasov equations:
\begin{equation}\label{e:burgers-vlasov}
\left\{
\begin{array}{ll}
\displaystyle u_t+uu_x=\int_\R fvdv-u\int_\R fdv,  \\
\displaystyle f_t+v f_x+(f(u-v))_v=0,
\end{array}
\right.
\end{equation}
with the initial data
\begin{equation}\label{e:initial}
u(x, 0)=u_0(x), \quad f(x, v, 0)=f_0(x, v)\geq0.
\end{equation}
The system \eqref{e:burgers-vlasov} is one kind of simple model about inviscid fluid-particles interaction. The
motion of the fluid with bulk velocity $u(x, t)$ is modeled by the inviscid Burgers equation, while the dispersed particles with probability density function $f(x, v, t)$ is described by Vlasov like equation. The interaction between the fluid and the dispersed particles is achieved by a friction term between the bulk velocity of fluid and velocity of the dispersed particles, namely the drag force term $\int_\R f(v-u)dv.$

For related fluid-structure models, we shall  first mention the following diffusive system, Burgers-Vlasov equations:
\begin{equation}\label{e:model}
\left\{
\begin{array}{ll}
\displaystyle \rho_g(u_t+uu_x-\nu_g u_{xx})=E_d,  \\
\displaystyle f_t+v f_x+(F_df)_v=0,
\end{array}
\right.
\end{equation}
in which a dispersed phase interacts with a viscous gas. Here $\rho_g$ is the density of gas. The force term $E_d$ describes the exchange of impulse between the gas and particles, and the drag force $F_d$ is used to describe the friction of viscous gas on the droplets. The force terms are related by the  following formulas:
\begin{align}
&E_d=C(r)\rho_p(u_p-u), \quad F_d=C(r)(u(x, t)-v), \notag \\
&\rho_p=\frac{4\pi}{3}\rho_lr^3\int_\R f(x, v, t)dv,\quad
\rho_p u_p=\frac{4\pi}{3}\rho_lr^3\int_\R f(x, v, t)vdv. \label{e:efd}
\end{align}
In \eqref{e:efd}, $\rho_l$ is the density of the liquid and $C(r)$ is a constant depending on the radius $r$ of droplets in the spray consisting of dispersed particles. After assuming that the gas is of constant mass density $\rho_g$ and simplified the momentum equation of the gas, the Burgers' equation, i.e. the first equation in \eqref{e:model}, is utilized  to model the evolution of the viscous gas. Further assuming the spray is of enough dilution and neglecting gravity effect, a Vlasov like equation, i.e. the second equation in \eqref{e:model}, is then applied to  govern the evolution of the particles. Other detailed information about the derivations and assumptions on \eqref{e:model} can also be found in \cite{Will85, G01, DR99}. For the mathematical analysis of \eqref{e:model}, as far as we know, the first global existence and uniqueness of classical solutions to the  Cauchy problem has been considered in \cite{DR99}, in which the Burgers-Vlasov equations are equipped with regular and compact supported initial data. Meanwhile, the stability of travelling wave are also considered. When the initial data is less regular than which in \cite{DR99}, the global existence and uniqueness of finite energy solutions are proved in \cite{G01}. The second related simpler 1-D model on fluid-structure interaction
\begin{equation}\label{e:fluid-structure}
\begin{cases}
u_t+uu_x=\lambda(h'(t)-u(t, h(t)))\delta_{h(t)},\\
mh''(t)=-\lambda(h'(t)-u(t, h(t))),
\end{cases}
\end{equation}
is also considered in \cite{LST08}, where $u$ is the velocity of the inviscid fluid and  $h(t)$ is the location of the particles. $\lambda$ is the positive friction constant and $\delta_{h(t)}$ is the Dirac measure at $h(t).$ Global entropy weak solution involving shock waves to the system \eqref{e:fluid-structure} are obtained in \cite{LST08}. There are also some other fluid-kinetic models: compressible/incompressible Euler/Navier-Stokes equations coupled with Vlasov/-Fokker-Plank equations.  Weak solutions or classical solution close to the equilibrium  are studied in \cite{RS96-1, RS96-2, Y13, MV07, LMW17}.  Some asymptotic problems such as hydrodynamic limit/stratified limit of viscous Burgers-Vlasov equation and Euler/Navier-Stokes equations coupling with Vlasov equation are also considered in \cite{G01, BMP07, NPS01, GJ04-1, GJ04-2, Ja00-C, Ja00-P, MV08}.

In this paper, we investigate the existence of global weak solutions to the Cauchy problem \eqref{e:burgers-vlasov}-\eqref{e:initial}. Since the derivation of the model in \cite{Do-DCDS} is in 1D, and as far as the authors know, references on 2D or 3D cases are not founded for such model,  we consider the 1D invisid Burgers-Vlasov equations. Comparing with the diffusive system \eqref{e:model}, without viscosity term, shock wave may exist when the initial data are given arbitrarily large.  Hence we consider the entropy weak solutions to inviscid Burgers' equation. Because of the nonlocal source term in the Burgers' equation, we require the solution of the Vlasov equation  be of finite kinetic energy. Consequently, we define  $L^\infty-L^1$ weak solution to \eqref{e:burgers-vlasov}.

\begin{definition}\label{d:weak-solution}
 For any fixed  $T\in(0, \infty)$, a pair of functions $u:\R\times[0, T]\rightarrow\R,$ $f:\R^2\times[0, T]\rightarrow[0, \infty)$ is called a global \textit{$L^\infty-L^1$ weak solution} of Cauchy problem \eqref{e:burgers-vlasov}-\eqref{e:initial} if the following statements hold:
\begin{itemize}
\item[(1)] $u(x, t)\in L^\infty(\R\times[0, T])$ and $f(x, v, t)\in L^\infty([0, T], (1+v^2)L^1(\R^2)).$

\item[(2)] $u(x, t)$ is an entropy solution to Burgers' equation, i.e.
for any $\phi\in C^1_c(\R\times[0, T)),$
\begin{align}\label{e:burgersweak}
\displaystyle\int_\R\phi(x, 0)u_0(x)dx+\int_0^T\!\!\!\int_\R\left(u\phi_t+\frac{1}{2}u^2 \phi_x+\phi\int_\R f(v-u)dv\right)dxdt=0,
\end{align}
and for any convex entropy pair $(\eta, q)$ the following entropy inequality
\begin{equation}\label{e:entropy-inequality}
\eta(u)_t +q(u)_x+\eta'(u)\!\!\int_\R f(u-v)dv\leq0
\end{equation}
holds in the sense of distributions, where $(\eta, q)$ satisfies $\eta'(u) u=q'(u)$ and $\eta(u)$ is convex with respect to $u$.

\item[(3)] $f(x, v, t)$ is a weak solution to Vlasov equation, i.e. for any $\psi\in C^1_c(\R^2\times[0, T)),$
\begin{align}\label{e:vlasovweak}
\displaystyle\int_\R\int_\R\psi(x, v, 0)f_0(x, v)dxdv+\int_0^T\!\!\!\int_\R\int_\R\left(f\psi_t+vf \psi_x+\psi_v f(u-v)\right)dvdxdt=0.
\end{align}
\end{itemize}
\end{definition}
Now we are ready to state our main result.
\begin{theorem}\label{t:bounded-weak}{\bf[Main theorem]}
Let initial data $(u_0, f_0)$ satisfy
\begin{equation}\label{e:initial-condition}
\|u_0(x)\|_{L^\infty(\R)}+\|(1+v^2)f_0(x, v)\|_{L^1(\R^2)}\leq M_0
\end{equation}
for some positive constant $M_0$.  Then there exists a global $L^\infty-L^1$ weak solution to \eqref{e:burgers-vlasov}-\eqref{e:initial} in the sense of Definition \ref{d:weak-solution} and there  is a constant $M$ depending solely on $M_0$ such that
\begin{equation}
\label{e:solution-bound}
\|u(x, t)\|_{L^\infty(\R\times[0, T])}+\|(1+v^2)f(x, v, t)\|_{L^\infty([0, T], L^1(\R^2))}\leq M.
\end{equation}
\end{theorem}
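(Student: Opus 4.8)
The plan is to construct the solution by the vanishing viscosity method combined with two compactness principles: $L^\infty$ compensated compactness for the Burgers component and weak $L^1$ (Dunford--Pettis) compactness for the kinetic component. First I would regularize the problem. I mollify and truncate the data so that $f_0^\eps$ is smooth with compact support in $v$ and $u_0^\eps$ is smooth, with the bound \eqref{e:initial-condition} preserved up to a vanishing error, and add artificial viscosity to obtain, for each fixed $\eps>0$,
\begin{equation*}
\left\{
\begin{array}{l}
u^\eps_t+u^\eps u^\eps_x-\eps u^\eps_{xx}=\int_\R f^\eps(v-u^\eps)\,dv,\\[1mm]
f^\eps_t+v f^\eps_x+\big((u^\eps-v)f^\eps\big)_v=\eps f^\eps_{xx}.
\end{array}
\right.
\end{equation*}
For fixed $\eps$ this is a parabolic equation coupled to a linear transport--diffusion equation; I would produce local smooth solutions by a contraction argument and extend them globally using the $\eps$-dependent bounds, the finite $v$-support being propagated along the characteristics $\dot x=v,\ \dot v=u^\eps-v$.

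The core of the proof is a family of a priori estimates for $(u^\eps,f^\eps)$ that are uniform in $\eps$ \emph{and} in $t$. Integrating the Vlasov equation gives conservation of mass $\iint f^\eps\,dv\,dx=\iint f_0^\eps\,dv\,dx=:m_0$. Multiplying it by $v^2$ and integrating yields, with $K(t):=\iint v^2 f^\eps\,dv\,dx$,
\begin{equation*}
\tfrac12\dot K=\iint u^\eps v f^\eps\,dv\,dx-K\le \|u^\eps\|_{L^\infty}\sqrt{m_0}\,\sqrt K-K,
\end{equation*}
so that the linear damping $-K$ hidden in the friction forces $K(t)\le\max\{K(0),\|u^\eps\|_{L^\infty}^2 m_0\}$, a bound independent of time. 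For the velocity I would run a maximum principle on the parabolic equation rewritten as $u^\eps_t+u^\eps u^\eps_x-\eps u^\eps_{xx}+\rho^\eps u^\eps=j^\eps$, with $\rho^\eps=\int f^\eps\,dv\ge0$ and $j^\eps=\int f^\eps v\,dv$: at a spatial maximum the drag $\int f^\eps(v-u^\eps)\,dv$ acquires a favourable sign once $u^\eps$ exceeds the particle velocities, which are themselves controlled along the characteristics by $|v(t)|\le\max\{|v(0)|,\|u^\eps\|_{L^\infty}\}$. Coupling this maximum principle with the kinetic energy control — estimating the large-velocity part of $f^\eps$ through the second moment rather than the velocity support — is what should deliver $\|u^\eps\|_{L^\infty(\R\times[0,T])}\le M$ with $M=M(M_0)$, uniformly in $\eps$ and $T$, i.e. exactly \eqref{e:solution-bound}.

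With these bounds I would pass to the limit $\eps\to0$. For the fluid, the uniform $L^\infty$ bound together with the standard viscous dissipation estimates make the entropy production $\eta(u^\eps)_t+q(u^\eps)_x$ precompact in $H^{-1}_{loc}$ for every convex entropy pair; since the Burgers flux $\tfrac12u^2$ is genuinely nonlinear, Tartar's commutation relation forces the associated Young measure to be a Dirac mass, whence $u^\eps\to u$ a.e. This strong convergence lets me take the limit in $\tfrac12(u^\eps)^2$ and in the entropy inequality \eqref{e:entropy-inequality}. For the particles, the uniform bound on $\iint(1+v^2)f^\eps$ gives equiintegrability and tightness in $v$, so by the Dunford--Pettis theorem $f^\eps\rightharpoonup f$ weakly in $L^1$; the moments $\int f^\eps\,dv$ and $\int f^\eps v\,dv$ converge thanks to the $v^2$ weight, and the products $f^\eps u^\eps$ converge because $u^\eps\to u$ strongly and boundedly. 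This suffices to pass to the limit in \eqref{e:burgersweak} and \eqref{e:vlasovweak} and to verify nonnegativity and the regularity required in Definition \ref{d:weak-solution}.

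The step I expect to be the main obstacle is precisely the pair of \emph{time-independent} uniform estimates. The $L^\infty$ bound on $u^\eps$ and the kinetic energy bound are genuinely coupled, each entering the derivation of the other, so the real work is to close this loop while keeping both bounds independent of $t$ and of the velocity support of the truncated data; a crude estimate of the drag term $\int f^\eps(v-u^\eps)\,dv$ would only yield bounds growing in $t$ or in $\eps$, so the dissipative friction structure must be exploited carefully, controlling the fast-particle tail by the finite second moment. A secondary difficulty is the equiintegrability in $x$ needed for Dunford--Pettis, since $f^\eps$ is controlled only in $L^1$; here the finite kinetic energy together with the transport structure must be used to preclude spatial concentration.
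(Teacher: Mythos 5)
Your overall architecture (vanishing viscosity, then compensated compactness for $u^\eps$ and Dunford--Pettis for $f^\eps$) matches the paper, but the step you yourself flag as ``the main obstacle'' --- the time- and $\eps$-independent $L^\infty$ bound on $u^\eps$ --- is not actually closed by the argument you sketch, and as written it cannot be. Your plan is a pointwise maximum principle: at a spatial maximum of $u^\eps$ the drag $\int f^\eps(v-u^\eps)\,dv$ should become favourable, with the fast-particle tail controlled ``through the second moment rather than the velocity support.'' The difficulty is that the kinetic energy bound you derive is $\iint f^\eps v^2\,dv\,dx\le M$, i.e.\ controlled only \emph{after integration in $x$}, whereas the maximum principle is evaluated at a single point $x^*$; nothing prevents $\int f^\eps(x^*,v,t)v^2\,dv$ from being enormous there, so the tail of the drag at the maximum point is not controlled by $M$. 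The alternative control you mention, $|v(t)|\le\max\{|v(0)|,\|u^\eps\|_{L^\infty}\}$ along characteristics, reintroduces the velocity support of the truncated data (of size $\eps^{-1/6}$ in the paper's normalization), which blows up as $\eps\to0$. Even ignoring this, the best such a pointwise argument yields is $\frac{d}{dt}U_{\max}\lesssim K/U_{\max}$, i.e.\ a bound growing like $\sqrt{t}$, not the time-independent bound claimed in \eqref{e:solution-bound}.

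The paper closes this loop with a device absent from your proposal: the control function $\psi(x,t)=\int_x^\infty\!\int f\,dv\,dy$, which satisfies $\psi_t=\int fv\,dv$, $\psi_x=-\int f\,dv$, and converts the merely $L^1_x$ information on the moments into an $L^\infty$ bound $\|\psi\|_{L^\infty}\le\iint f_0\le M_0$ via mass conservation. To make this work the artificial viscosity must be chosen as $\eps u^\eps_{xx}+\eps(\int f^\eps dv)_x$ (not just $\eps u^\eps_{xx}$), so that $u^\eps-\psi$ solves exactly $(u^\eps-\psi)_t+u^\eps(u^\eps-\psi)_x=\eps(u^\eps-\psi)_{xx}$ with no source, and the maximum principle gives $\|u^\eps\|_{L^\infty}\le 3M_0$ uniformly in $\eps$ and $T$. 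A secondary point: you add $\eps f^\eps_{xx}$ to the Vlasov equation, which destroys the explicit characteristic representation $f^\eps=f_0^\eps(X(0),V(0))e^t$ and the Jacobian identity $J(\tau)=e^{t-\tau}$ that the paper uses to prove the equiintegrability condition (2b) of Dunford--Pettis by change of variables; you would need to supply a replacement argument for equiintegrability in $x$, which you list as a difficulty but do not resolve. The paper keeps the Vlasov equation purely hyperbolic precisely to avoid this.
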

Hereafter, $M$ denotes the constant depending only on $M_0$ and it may vary from line to line.
\begin{remark}
 Our uniform bounds of velocity $\|u\|_{L^\infty}$ and kinetic energy of Vlasov equation $\int_\R\!\!\int_\R f(1+v^2)dvdx$  are both independent of time $T.$
\end{remark}

Our strategy of proving Theorem \ref{t:bounded-weak} is to construct approximate solutions by adding artificial viscosity to Burgers equation technically and regard the nonlocal term $\!\!\int_\R f(v-u)dv$ as a dissipative source term  in some sense. Maximum principles of parabolic equation and transport equation are applied to establish the uniform bound.  Besides, in order to get the uniform estimates ( also be independent of time $T$) of approximate viscosity solution $u^\varepsilon$ , we add some novel viscosity term and choose a special control function.  In proving almost everywhere convergence of $u^\eps,$ we employ $L^\infty$ compensated compactness framework. More information about compensated compactness framework of $L^p$ or $L^\infty$ space can be found in \cite{M78, D83-CMP, D83-ARMA,CP10, GL16} and the reference therein. On the other hand, to show the weak $L^1$ convergence of $f^\eps$, we  apply Dunford-Pettis theorem and analyze kinetic energy of $f^\varepsilon$  and evolution of sets. In the proof, we also came across the difficulty on the weak convergence of $\int vf^\eps dv$, which is overcame again by applying the uniform bound of kinetic energy.

 To be concise, in the present paper, we use $\int$ instead of $\int_\R$.  $C(\cdot)$ denotes constant depending on the parameters in the bracket. The rest of the paper is organised as follows. Section \ref{s:approximate-solution} is devoted to construct approximate solutions and prove their global existence.  The proof of Theorem \ref{t:bounded-weak} is given in Section \ref{s:proof}.

\section{Approximate solutions}\label{s:approximate-solution}
In this section, we construct the globally existing approximate solutions to problem \eqref{e:burgers-vlasov}-\eqref{e:initial} by adding artificial viscosity and choosing the initial data technically.
 That is, we consider the approximate problem
\begin{equation}\label{e:burgers-vlasov-viscosity}
\left\{
\begin{array}{ll}
\displaystyle u^\eps_t+u^\eps u^\eps_x=\eps u^\eps_{xx}+\eps\left(\!\int f^\eps dv\right)_x+\int f^\eps vdv-u^\eps\int f^\eps dv, \\
\displaystyle f^\eps_t+v f^\eps_x+(f^\eps(u^\eps-v))_v=0,
\end{array}
\right.
\end{equation}
and the carefully selected initial data
\begin{align}
&u^\eps(x, 0)=u_0^\eps(x)=u_0(x)*j_\eps(x), \notag\\
&f^\eps(x, v, 0)=f^\eps_0(x, v)=\left[\min\{\eps^{-{1}/{6}}, \,f_0(x, v)\mathbf{1}_{\{|x|+|v|\leq\eps^{-{1}/{6}}\}} \}\right]*j_\eps(x)*j_\eps(v)\geq0, \label{e:initial-viscosity}
\end{align}
where $j_\eps$ is standard one dimensional mollifier with parameter $\eps.$ The initial data here is chosen to make $f^\eps$ be of explicit $\eps$-depending compact support and $L^\infty$ bound for later use. Our idea of adding the above viscosity is based on the following reasons. As is known, linear transport equation preserves the regularity of initial data and inviscid Burgers' equation may formulate shock wave. Thus we add parabolic viscosity term to Burgers' equation only. Moreover, to gain the uniform bound of $u^\eps$, we not only make full use of Vlasov equation and flux term in Burgers equation but also carefully choose control function. The viscosity term $\eps(\int_\R f^\eps dv)_x$ is introduced due to derivatives of our control function.

For any $\sigma\in(0, 1),$ we use $C^{2+\sigma}(\R^2\times[0, T])$ and $C^{2+\sigma, 1+\frac{\sigma}{2}}(\R\times[0, T])$ to denote usual and parabolic H\"older space respectively. We now would like to consider the global existence and uniqueness of smooth solutions $(u^\eps, f^\eps)$ to the Cauchy problem \eqref{e:burgers-vlasov-viscosity}-\eqref{e:initial-viscosity} with $u^\eps(x, t)\in C^{2+\sigma, 1+\frac{\sigma}{2}}(\R\times[0, T])$, $0\leq f^\eps(x, v, t)\in C^{2+\sigma}(\R^2\times[0, T])$. For simplicity, in this section, the up index $\eps$ in $u^\eps$ and $f^\eps$ will be dropped.

\subsection{Local existence.} We first consider the local existence of smooth solution of the Cauchy problem \eqref{e:burgers-vlasov-viscosity}-\eqref{e:initial-viscosity}. Let
\begin{align*}
G(x, t)=\begin{cases}
\delta(x), & t=0,\\
\frac{1}{\sqrt{4\pi\eps t}}e^{\frac{-x^2}{4\eps t}}, & t>0,
\end{cases}
\end{align*}
denote the kernel of the homogeneous heat equation $u_t=\eps u_{xx}.$  Then from the Burgers' equation, i.e. the first equation in \eqref{e:burgers-vlasov-viscosity}, Duhamel principle tells us
\begin{align*}
u(x, t)=&\int\!\! G(x-y, t)u_0(y)dy+\int_0^t\!\!\int G(x-y, t-s)\left(\!\int fvdv-u\!\int fdv \right)(y, s)dyds\\
&+\int_0^t\!\!\int G(x-y, t-s)\left(\eps\int fdv-\frac{1}{2}u^2\right)_y(y, s)dyds.
\end{align*}
Integrating by parts gives
\begin{align}
u(x, t)=&\int\!\! G(x-y, t)u_0(y)dy+\int_0^t\!\!\int G(x-y, t-s)\left(\!\int fvdv-u\!\int fdv \right)(y, s)dyds \notag\\
&+\int_0^t\!\!\int G_y(x-y, t-s)\left(\eps\int fdv-\frac{1}{2}u^2\right)(y, s)dyds, \label{e:u-implicit}
\end{align}
which implicitly gives the solution to the Burgers' equation. For the Vlasov equation, i.e. the second equation in \eqref{e:burgers-vlasov-viscosity}, we rewrite it as
\begin{align*}
f_t+vf_x+(u-v)f_v=f,
\end{align*}
which is a transport equation. One can integrate along the backward characteristic curves
\begin{align}
\frac{d}{ds}X(s; x, v, t)=V(s; x, v, t),\quad &X(t; x, v, t)=x, \label{e:x-equation}\\
\frac{d}{ds}V(s; x, v, t)=u(X(s; x, v, t), s)-V(s; x, v, t), \quad &
V(t; x, v, t)=v,\label{e:v-equation}
\end{align}
to get
\begin{equation}\label{e:f-implicit}
f(x, v, t)=f_0(X(0; x, v, t), V(0; x, v, t))e^t.
\end{equation}
Note that for smooth $u,$ the system of \eqref{e:x-equation} and \eqref{e:v-equation} has a unique smooth solution $(X(s; x, v, t), V(s; x, v, t)):$
\begin{align}
&X(s; x, v, t)=x+\int_t^s V(\tau; x, v, t)d\tau, \label{e:x-solution}\\
&V(s; x, v, t)=ve^{t-s}+\int_t^s e^{\tau-s}u(X(\tau; x, v, t), \tau)d\tau. \label{e:v-solution}
\end{align}
Hence, from \eqref{e:u-implicit} and \eqref{e:f-implicit} we construct the approximate solutions of \eqref{e:burgers-vlasov-viscosity} in the following way. Set $u^{(0)}=u_0^\eps, f^{(0)}=f_0^\eps,$ then there exists a {$K=C(\eps, M_0)$ such that for any $t>0$}
\begin{align*}
\|u^{(0)}\|_{C^{2+\sigma, 1+\frac{\sigma}{2}}(\R\times[0, t])}\leq K, \quad \|f^{(0)}\|_{C^{2+\sigma}(\R^2\times[0, t])}\leq K, \quad |\text{supp} f^{(0)}|\leq K,
\end{align*}
{where we used the definition
\begin{align*}
 |\text{supp} f|=\max \{|x|,|v|: f(x,v,\cdot)>0\}.
 \end{align*}}
For $k\geq1,$ we define
\begin{equation*}
\begin{array}{ll}
u^{(k)}(x, t)&\disp=\int\!\! G(x-y, t){u_0}(y)dy\\
&\disp+\int_0^t\!\!\int G(x-y, t-s)\left(\!\int f^{(k-1)}vdv-u^{(k-1)}\!\int f^{(k-1)}dv \right)(y, s)dyds\\
&\disp+\int_0^t\!\!\int G_y(x-y, t-s)\left(\eps\int f^{(k-1)}dv-\frac{1}{2}(u^{(k-1)})^2\right)(y, s)dyds,
\end{array}
\end{equation*}
and
\begin{equation*}
\begin{array}{ll}
&\disp f^{(k)}(x, v, t)= f_0(X^{(k)}(0; x, v, t), V^{(k)}(0; x, v, t))e^t,
\end{array}
\end{equation*}
where $(X^{(k)}(s; x, v, t), V^{(k)}(s; x, v, t))$ is defined using  \eqref{e:x-equation} and \eqref{e:v-equation} as
\begin{align*}
\frac{d}{ds}X^{(k)}(s; x, v, t)&=V^{(k)}(s; x, v, t),\\
 X^{(k)}(t; x, v, t)&=x, \\
\frac{d}{ds}V^{(k)}(s; x, v, t)&=u^{(k-1)}(X^{(k)}(s; x, v, t), s)-V^{(k)}(s; x, v, t), \\
V^{(k)}(t; x, v, t)&=v,
\end{align*}
{and }$(x, v)\in \text{supp} f^{(k)}.$
It is easy to see that $(X^{(k)}, V^{(k)})$ is well-defined. Thus $f^{(k)}$ and $u^{(k)}$ {make sense}. Besides, one can also see that $(u^{(k)}, f^{(k)})$ solves the following approximate equations
\begin{equation}\label{e:k-equation}
\left\{
\begin{array}{ll}
\displaystyle u^{(k)}_t+u^{(k-1)} u^{(k-1)}_x=\eps u^{(k)}_{xx}+\eps\left(\!\int f^{(k-1)} dv\right)_x+\int f^{(k-1)} vdv-u^{(k-1)}\int f^{(k-1)} dv, \\
\displaystyle f^{(k)}_t+v f^{(k)}_x+(u^{(k-1)}-v)f^{(k)}_v=f^{(k)},
\end{array}
\right.
\end{equation}

Obviously, from \eqref{e:initial-viscosity}, for any $(x, v)\in \text{supp} f^{(1)}$,
$$|X^{(1)}(0; x, v, t)|+|V^{(1)}(0; x, v, t)|\leq \eps^{-1/6}.$$
{Using \eqref{e:x-solution}, \eqref{e:v-solution} and the bound of $u^{(0)}$, we have
\begin{align*}
&|v|\leq |V^{(1)}(0; x, v, t)e^{-t}|+\left|\int_0^t e^{\tau-t}u^{(0)}(X^{(1)}(\tau; x, v, t), \tau)d\tau\right|\leq \varepsilon^{-{1/ 6}}+K\leq 2K, \\
&|x|\leq |X^{(1)}(0; x, v, t)|+\left|\int_0^tV^{(1)}(\tau; x, v, t)d\tau\right|\leq \eps^{-1/6}+3Kte^t\leq 2K,
\end{align*}
provided $t$ is sufficient small, where we have used the fact that
\begin{align*}
|V^{(1)}(s; x,v,t)|\leq & |v e^{t-s}|+\left|\int_t^s e^{\tau-s}u^{(0)}(X^{(1)}(\tau; x,v,t),\tau)d\tau\right| \\
\leq & e^t(2K+\|u^{(0)}\|_{L^\infty})\leq 3Ke^t, ~~for ~ 0\leq s\leq t.
\end{align*}
Thus  $|\text{supp}f^{(1)}|\leq 2K.$
Moreover, we have the following conclusion.
\begin{lemma}\label{l:constraction}
There exists a small $t_0>0$ such that the sequence $\{(u^{(k)}, f^{(k)})\}_{k\geq0}$ constructed above is a  contraction sequence in
\begin{align*}
\mathcal{S}(t)=\{(u, f)| \|u\|_{C^{2+\sigma, 1+\frac{\sigma}{2}}(\R\times[0, t])}\leq 2K, \quad
\|f\|_{C^{2+\sigma}(\R^2\times[0, t])}\leq 2K, \quad |\text{supp}f|\leq 2K\}
\end{align*}
for {all} $t\in(0, t_0)$.
\end{lemma}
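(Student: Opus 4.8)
The plan is to run a standard Picard (contraction-mapping) argument on the iteration $(u^{(k-1)}, f^{(k-1)}) \mapsto (u^{(k)}, f^{(k)})$ defined above: I would first show that the map sends $\mathcal{S}(t)$ into itself for $t$ small, so that every iterate lies in $\mathcal{S}(t)$, and then show that consecutive iterates contract in the weaker sup norm. Throughout, $K = C(\eps, M_0)$ is the fixed constant from the zeroth step, and estimating constants are allowed to depend on $\eps$ and $K$.

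\textbf{Self-map property.} Suppose $(u^{(k-1)}, f^{(k-1)}) \in \mathcal{S}(t)$. For the kinetic unknown, the support and $C^{2+\sigma}$ bounds on $f^{(k)}$ follow exactly as in the computation preceding the lemma: the characteristic system for $(X^{(k)}, V^{(k)})$ has right-hand side controlled by $\|u^{(k-1)}\|_{L^\infty} \leq 2K$, so Gronwall's inequality keeps the backward flow within $2K$ of the identity and keeps its derivatives up to order $2+\sigma$ — inherited from the smoothness of $u^{(k-1)}$ and of $f_0^\eps$ — bounded by $2K$ once $t \leq t_0$; multiplying by $e^t$ closes the bound. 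For the fluid unknown I would use the Duhamel representation: the leading term $\int G(x-y,t) u_0(y)\,dy$ is bounded in $C^{2+\sigma, 1+\frac{\sigma}{2}}$ by $K$ because $u_0^\eps$ is smooth, while the two integrals over $[0,t]$ are estimated by the kernel bounds $\|G(\cdot,\tau)\|_{L^1} = 1$ and $\|G_y(\cdot,\tau)\|_{L^1} \leq C\tau^{-1/2}$. The integrands $\int f^{(k-1)} v\,dv - u^{(k-1)}\int f^{(k-1)}\,dv$ and $\eps\int f^{(k-1)}\,dv - \tfrac12 (u^{(k-1)})^2$ are bounded by $C(\eps, K)$ using the $\mathcal{S}(t)$ bounds together with the compact $v$-support of $f^{(k-1)}$, so these contributions are at most $C(\eps, K)\, t^{1/2}$, which is $\leq K$ for $t \leq t_0$ small. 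Hence $\|u^{(k)}\|_{C^{2+\sigma, 1+\frac{\sigma}{2}}} \leq 2K$ and $(u^{(k)}, f^{(k)}) \in \mathcal{S}(t)$.

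\textbf{Contraction.} Writing $U_k = u^{(k)} - u^{(k-1)}$ and $F_k = f^{(k)} - f^{(k-1)}$, I would subtract two consecutive Duhamel formulas and two consecutive characteristic representations. The fluid difference $U_{k+1}$ equals the same heat-kernel integrals applied to the differences of the source and flux terms; each such difference is Lipschitz in $(U_k, F_k)$ with constant $C(\eps, K)$ by the uniform $\mathcal{S}(t)$ bounds, so \[ \|U_{k+1}\|_{C^0} \leq C(\eps, K)\, t^{1/2}\big(\|U_k\|_{C^0} + \|F_k\|_{C^0}\big). \] The kinetic difference is controlled by the difference of characteristics $(X^{(k+1)} - X^{(k)}, V^{(k+1)} - V^{(k)})$, which solves a linear ODE driven by $u^{(k)} - u^{(k-1)} = U_k$; Gronwall on $[0,t]$ together with one derivative of $f_0^\eps$ then gives \[ \|F_{k+1}\|_{C^0} \leq C(K)\, t\, \|U_k\|_{C^0}. \] Adding these yields \[ \|U_{k+1}\|_{C^0} + \|F_{k+1}\|_{C^0} \leq \theta(t)\big(\|U_k\|_{C^0} + \|F_k\|_{C^0}\big), \qquad \theta(t) = C(\eps,K)\big(t^{1/2} + t\big), \] and choosing $t_0$ so small that $\theta(t_0) < 1$ makes the iteration a contraction in the sup norm.

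\textbf{Main obstacle.} The delicate point is the flux integral carrying $G_y$, whose kernel is only $\tau^{-1/2}$-integrable near the diagonal; one must check that this singularity still yields a strictly positive power of $t$ (here $t^{1/2}$) after integration, and, for the self-map step, that the same bookkeeping survives when estimating the full parabolic H\"older norm $C^{2+\sigma, 1+\frac{\sigma}{2}}$ rather than merely $C^0$, which invokes the Schauder-type estimates for the nonhomogeneous heat equation. A secondary subtlety is that invariance is proved in the strong norm while the contraction is only obtained in the weak $C^0$ norm; passing to the limit then relies on the standard fact that a $C^0$-Cauchy sequence uniformly bounded in $C^{2+\sigma}$ converges in every $C^{2+\sigma'}$ with $\sigma' < \sigma$, which is enough to identify the limit as a classical solution of \eqref{e:burgers-vlasov-viscosity}.
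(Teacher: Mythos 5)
Your proposal is correct, and the invariance (self-map) step coincides with the paper's argument: induction on $k$, support and $C^{2+\sigma}$ control of $f^{(k)}$ via the backward characteristics and Gronwall, and control of $u^{(k)}$ via the Duhamel formula with the kernel bounds $\|G(\cdot,\tau)\|_{L^1}=1$ and $\|G_y(\cdot,\tau)\|_{L^1}\leq C(\eps\tau)^{-1/2}$ (the paper handles the higher derivatives by exploiting the symmetry of $G$ to integrate by parts and put all derivatives on the integrand, rather than invoking Schauder theory, but this is the same bookkeeping). Where you genuinely diverge is the contraction step: the paper claims the differences $u^{(k+1)}-u^{(k)}$ and $f^{(k+1)}-f^{(k)}$ contract directly in the strong norms $C^{2+\sigma,1+\frac{\sigma}{2}}$ and $C^{2+\sigma}$ (with factors $C(\eps,K)t$ and $C(\eps,K)(e^t-1)$), so the fixed point is obtained in $\mathcal{S}(t)$ itself, whereas you contract only in $C^0$ and recover the classical solution from the uniform $C^{2+\sigma}$ bounds by interpolation into $C^{2+\sigma'}$, $\sigma'<\sigma$. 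Your two-norm variant is cleaner in one respect: it avoids having to track $2+\sigma$ derivatives of the difference of characteristic flows (which requires $C^{2+\sigma}$ control of $u^{(k)}-u^{(k-1)}$ and is the most delicate part of the paper's one-line claim), at the modest cost of the final compactness/interpolation step and of delivering convergence in a slightly weaker topology than the lemma's literal phrase ``contraction sequence in $\mathcal{S}(t)$'' suggests; for the purpose the lemma serves — producing the unique local smooth solution — either route suffices.
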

\begin{proof}
It is easy to see that $(u^{(0)}, f^{(0)})\in S(t)$. Suppose that for $k\geq1$, $(u^{(k-1)}, f^{(k-1)})$ has been shown in $\mathcal{S}(t)$. We then estimate $(u^{(k)}, f^{(k)})$.

Taking derivatives with respect to $x$, one has for $\ell=0, 1, 2,$
\begin{align*}
\dx^\ell u^{(k)}(x, t)=&\int\!\! \dx^\ell G(x-y, t){u_0}(y)dy\\
&+\int_0^t\!\!\int \dx^\ell G(x-y, t-s)\left(\!\int f^{(k-1)}vdv-u^{(k-1)}\!\int f^{(k-1)}dv \right)(y, s)dyds\\
&+\int_0^t\!\!\int \dx^\ell G_y(x-y, t-s)\left(\eps\int f^{(k-1)}dv-\frac{1}{2}(u^{(k-1)})^2\right)(y, s)dyds.
\end{align*}
Using the symmetry of $G(x-y),$ one further has
\begin{align*}
\dx^\ell u^{(k)}(x, t)=&\int\!\! \dx^\ell G(x-y, t){u_0}(y)dy\\
&+\int_0^t\!\!\int (-1)^\ell\dy^\ell G(x-y, t-s)\left(\!\int f^{(k-1)}vdv-u^{(k-1)}\!\int f^{(k-1)}dv \right)(y, s)dyds\\
&+\int_0^t\!\!\int (-1)^\ell\dy^\ell G_y(x-y, t-s)\left(\eps\int f^{(k-1)}dv-\frac{1}{2}(u^{(k-1)})^2\right)(y, s)dyds.
\end{align*}
Integration by parts gives
\begin{align*}
\dx^\ell u^{(k)}(x, t)=&\int\!\! G(x-y, t)\dy^\ell {u_0(y)dy}\\
&+\int_0^t\!\!\int G(x-y, t-s)\left(\dy^\ell\!\int f^{(k-1)}vdv\right)(y, s)dyds\\
&-\int_0^t\!\!\int G(x-y, t-s)\dy^\ell\left( u^{(k-1)}\!\int f^{(k-1)}dv\right)(y, s)dyds\\
&+\int_0^t\!\!\int G_y(x-y, t-s)\dy^\ell\left(\eps\int f^{(k-1)}dv\right)(y, s)dyds,\\
&-\int_0^t\!\!\int G_y(x-y, t-s)\dy^\ell\left(\frac{1}{2}(u^{(k-1)})^2\right)(y, s)dyds.
\end{align*}
 For any $(x, v)\in \text{supp} f^{(k)}$, $t<t_1\ll1$, one has
$$|X^{(k)}(0; x, v, t)|+|V^{(k)}(0; x, v, t)|\leq \eps^{-1/6},$$
and
\begin{align*}
&|v|\leq |V^{(k)}(0; x, v, t)e^{-t}|+\left|\int_0^t e^{\tau-t}u^{(k-1)}(X^{(k)}(\tau; x, v, t), \tau)d\tau\right|\leq K+ 2Kt\leq 2K, \\
&|V^{(k)}(s; x, v, t)|\leq |v e^{t-s}|+|\int_t^s e^{\tau-s}u^{(k-1)}(X^{(k)}(\tau; x,v,t),\tau)d\tau| \leq  4e^t K\leq6K, \\
&|x|\leq |X^{(k)}(0; x, v, t)|+\left|\int_0^tV^{(k)}(\tau; x, v, t)d\tau\right|\leq K+{4Kte^t\leq 2K}\\
&|X^{(k)}(s; x, v, t)|\leq |x|+t|V^{(k)(s; x, v, t)}|\leq 2K+4te^tK\leq6K
\end{align*}
Thus
\begin{equation}\label{e:support-fk}
|\text{supp}f^{(k)}|\leq 2K.
\end{equation}
 Noticing the estimates of heat kernel
\begin{align*}
\int G(x, t)dx=1, \quad \int |G_x(x, t)| dx\leq \frac{C}{\sqrt{\eps t}},
\end{align*}
using the bound of $\text{supp} f^{(k-1)}$, one has
$$\|u^{(k)}(\cdot, t)\|_{C^2(\R)}\leq K+C(\eps, K)(t+\sqrt{t}),$$
then further gets
\begin{equation}\label{e:uk-estimate-1}
\|u^{(k)}\|_{C^{2+\sigma, 1+\frac{\sigma}{2}}(\R\times[0, t])}\leq K+C(\eps, K)(t+\sqrt{t})\leq 2K,
\end{equation}
provided $t<t_2\ll1$.

For the estimates of $f^{(k)} $ and for any $0\leq|\alpha|\leq 2,$ taking derivatives one has
\begin{equation}\label{e: derivatives-f^k}
(\partial^\alpha f^{(k)})_t+v(\partial^\alpha f^{(k)})_x+(u^{(k-1)}-v)(\partial^\alpha f^{(k)})_v=B(\alpha)\partial^\alpha f^{(k)}+ B(\alpha'),
\end{equation}
where $\partial^\alpha$ denotes the mix derivatives of $x, v$ and $t$ with order $|\alpha|$, $B(\alpha)$ is a linear function of $v$ and $\partial_{x,t}^\beta u^{(k-1)},$ with $|\beta|\leq|\alpha|$ and  $B(\alpha')$ is a  linear combination of $\partial^{\alpha'}f^{(k)}$ with $|\alpha'|<|\alpha|,$ whose coefficients are linear functions of $v$ and $\partial_{x,t}^\beta u^{(k-1)},$ with $|\beta|\leq|\alpha|.$
Integrating (\ref{e: derivatives-f^k}) along the characteristic curves $(X^{(k)}, V^{(k)})$, one has
\begin{align*}
\partial^\alpha f^{k}=& (\partial^\alpha f_0)(X^{(k)}, V^{(k)})e^{\int_0^tB(\alpha)(X^{(k)}, V^{(k)})d\tau}\\
&+\int_0^t e^{\int_\tau^tB(\alpha)(X^{(k)}, V^{(k)})ds}B( \alpha')(X^{(k)}, V^{(k)})d\tau,
\end{align*}
which then implies
$$\| f^{(k)}\|_{C^2(\R^2\times[0, t])}\leq Ke^{tC(\eps, K)}+C(\eps, K)te^{C(\eps, K)t}\leq 2K.$$
We further gain the following H\"older estimate
\begin{equation}\label{e:fk-estimate-1}
\|f^{(k)}\|_{C^{2+\sigma}(\R^2\times[0, t])}\leq 2K,
\end{equation}
provided $t<t_3\ll1$.
Hence from \eqref{e:support-fk}, \eqref{e:uk-estimate-1} and \eqref{e:fk-estimate-1}, one has
$$(u^{(k)}, f^{(k)})\in \mathcal{S}(t).$$
Moreover, consider the equation for $u^{(k)}-u^{(k-1)}$ and $f^{(k)}-f^{(k-1)}$, similar to above calculation, we get
\begin{align*}
\|u^{(k+1)}-u^{(k)}\|_{C^{2+\sigma, 1+\frac{\sigma}{2}}(\R\times[0, t])}\leq& C(\eps, K)t\|u^{(k)}-u^{(k-1)}\|_{C^{2+\sigma, 1+\frac{\sigma}{2}}(\R\times[0, t])}\\
\leq&\frac{1}{2}\|u^{(k)}-u^{(k-1)}\|_{C^{2+\sigma, 1+\frac{\sigma}{2}}(\R\times[0, t])}\\
\|f^{(k+1)}-f^{(k)}\|_{C^{2+\sigma}(\R^2\times[0, t])}\leq& C(\eps, K)(e^{t}-1)\|f^{(k)}-f^{(k-1)}\|_{C^{2+\sigma}(\R^2\times[0, t])}\\
\leq&\frac{1}{2}\|f^{(k)}-f^{(k-1)}\|_{C^{2+\sigma}(\R^2\times[0, t])},
\end{align*}
provided $t<t_4\ll1$. Let $t_0=\min\{t_i, i=1, \cdots, 4\}.$ Then we gain $(u^{(k)}, f^{(k)})$ is a contraction sequence in $\mathcal{S}(t)$ with $t\in(0, t_0)$ and end the proof.
\end{proof}

Applying fixed point theorem to $(u^{(k)}, f^{(k)}), $ combing with Lemma \ref{l:constraction}, one gains that there exists a pair of functions $(u, f)$ such that
\begin{align*}
&u^{(k)}\rightarrow u \text{ in }C^{2+\sigma, 1+\frac{\sigma}{2}}(\R\times[0, t_0]), \\
&f^{(k)}\rightarrow f \text{ in } C^{2+\sigma}(\R^2\times[0, t_0]),
\end{align*}
 and $(u, f)$ is the unique smooth solution of Cauchy problem \eqref{e:burgers-vlasov-viscosity}-\eqref{e:initial-viscosity} by taking limit in \eqref{e:k-equation}.

\subsection{Uniform estimates.} We will apply maximum principles of parabolic equation and  transport equation to bound $\|u\|_{L^\infty(\R\times[0, T])}$ and $\|f\|_{L^\infty([0, T], (1+v^2)L^1(\R^2))}.$

\begin{lemma}\label{l:uniform}
For the approximate solutions constructed above, there exists a constant $M$ depending only on $M_0$ such that
\eqref{e:solution-bound} holds for $(u^\eps, f^\eps).$
\end{lemma}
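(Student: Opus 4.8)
The plan is to prove the two bounds of \eqref{e:solution-bound} in the order: mass conservation, then the $L^\infty$ bound for $u^\eps$, then the kinetic energy bound for $f^\eps$, since each estimate feeds the next and the friction structure makes all constants $\eps$- and $T$-independent. Throughout I write $F=\int f^\eps dv$ and $J=\int v f^\eps dv$ and drop the $\eps$ superscript. First I would take velocity moments of the Vlasov equation in \eqref{e:burgers-vlasov-viscosity}: testing against $1$, $v$, $v^2$ over $\R^2$ and integrating by parts in $v$ (legitimate since $f$ has compact support for each fixed $\eps$) yields mass conservation $\frac{d}{dt}\int\int f\,dvdx=0$, the local continuity relation $F_t+J_x=0$, and the energy identity $\frac{d}{dt}\int\int v^2 f\,dvdx=2\int\int vu f\,dvdx-2\int\int v^2 f\,dvdx$. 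From the truncation-and-mollification in \eqref{e:initial-viscosity} the initial mass and second moment are controlled by $M_0$, so $m(t):=\int\int f\,dvdx\le M_0$ for all $t$.

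The crucial step is the $L^\infty$ bound for $u$, for which I would introduce the spatial \emph{control function} $\Phi(x,t)=a-\int_{-\infty}^x F(y,t)\,dy$ with $a$ constant. Here $\Phi_x=-F$, $\Phi_{xx}=-F_x$, and using $F_t+J_x=0$ with $J(-\infty,t)=0$ one gets $\Phi_t=J$. Substituting $u=w+\Phi$ into the first equation of \eqref{e:burgers-vlasov-viscosity}, the artificial viscosity $\eps F_x$ cancels $\eps\Phi_{xx}=-\eps F_x$; the nonlinear term $-uu_x$ and the drag $-uF$ combine (via $u_x=w_x-F$) to leave only the transport term $-uw_x$; and the source $J$ cancels $\Phi_t$. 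What survives is the source-free linear parabolic equation
\[
w_t+u w_x-\eps w_{xx}=0 ,
\]
which is exactly why the term $\eps(\int f^\eps dv)_x$ was inserted. Choosing $a\ge\sup_x\big(u_0^\eps(x)+\int_{-\infty}^x F(y,0)dy\big)$ makes $w(\cdot,0)\le0$, and the maximum principle for bounded solutions on $\R$ (bounded drift $u$, no zeroth order term) gives $w\le0$, i.e. $u\le\Phi\le a\le\|u_0\|_\infty+m$. The mirror choice $\underline\Phi=b-\int_{-\infty}^x F\,dy$ with $b\le\inf_x\big(u_0^\eps+\int_{-\infty}^x F(y,0)dy\big)$ satisfies the same equation and yields $u\ge b-m\ge-\|u_0\|_\infty-m$. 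Hence $\|u\|_{L^\infty(\R\times[0,T])}\le 2M_0$, uniformly in $\eps$ and $T$.

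Finally I would feed this into the energy identity and estimate $2\int\int vu f\le\int\int(u^2+v^2)f\le\|u\|_\infty^2\,m+e$, where $e:=\int\int v^2 f\,dvdx$. This gives $\frac{de}{dt}\le -e+\|u\|_\infty^2\,m$, so Gronwall yields $e(t)\le\max\{e(0),\|u\|_\infty^2 m\}\le C(M_0)$ for all $t$; the damping $-2e$ coming from the friction is precisely what makes this time-independent. Adding the mass and energy bounds gives $\|(1+v^2)f\|_{L^\infty([0,T],L^1(\R^2))}\le C(M_0)$, which completes \eqref{e:solution-bound}.

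I expect the main obstacle to be the $L^\infty$ bound on $u$ itself: the source $J=\int vf\,dv$ is only controlled by the \emph{integrated} kinetic energy and is not pointwise bounded, so a direct maximum principle with a time-only barrier fails. The entire difficulty is dissolved by the spatial control function $\Phi$ together with the matching viscosity $\eps F_x$, which turn the coupled quasilinear problem into a source-free linear parabolic equation for $w=u-\Phi$. A minor technical point is justifying the comparison principle on the unbounded domain $\R$; this is legitimate since $\Phi$ is bounded (as $\int_{-\infty}^x F\,dy\in[0,m]$) and $u$ is a bounded smooth solution with bounded drift.
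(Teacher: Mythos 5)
Your proposal is correct and follows essentially the same route as the paper: your control function $\Phi=a-\int_{-\infty}^x\int f^\eps\,dv\,dy$ coincides (up to the additive constant, since total mass is conserved) with the paper's $\psi(x,t)=\int_x^\infty\int f^\eps\,dv\,dy$, and the cancellation of $\eps(\int f^\eps dv)_x$ against $\eps\Phi_{xx}$, the resulting source-free equation $w_t+u w_x=\eps w_{xx}$ with the maximum principle, and the Gronwall argument on the second moment are all exactly the paper's steps. The only cosmetic differences are your one-sided barrier formulation versus the paper's direct bound on $\|u^\eps-\psi\|_{L^\infty}$ (giving $2M_0$ instead of $3M_0$), and the paper's more explicit bootstrap closing of an a priori bound on $\|u^\eps\|_{L^\infty}$ to justify the compact support of $f^\eps$.
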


\begin{proof}
Obviously, from \eqref{e:initial-viscosity} and \eqref{e:f-implicit} one has
$f(x, v, t)\geq0.$ For any $(x, v)\in\supp{f},$ by  \eqref{e:initial-viscosity} and \eqref{e:v-solution}, one has
\begin{align}
|v|\leq&|V(0; x, v, t)e^{-t}|+\left|\int_0^t e^{\tau-t}u(X(\tau; x, v, t), \tau)d\tau\right|\notag\\
\leq&\eps^{-{1}/{6}}+\hat{C}(\eps, M_0, T), \label{e:v-bound}
\end{align}
after priori assuming
$$\|u\|_{L^{\infty}}\leq \hat{C}(\eps, M_0, T) \text{ for some large enough } \hat{C}(\eps, M_0, T).$$
Besides,
\begin{align}
|x|\leq &|X(0; x, v, t)|+\left|\int_0^tV(\tau; x, v, t)d\tau\right|\notag\\
\leq&\eps^{-1/6}+(2\hat{C}(\eps, M_0, T)+\eps^{-1/6})Te^T. \label{e:x-bound}
\end{align}
Hence by \eqref{e:v-bound}-\eqref{e:x-bound}, $f$ enjoys compact support (depending on $\eps, T$) and
\begin{align*}
\lim_{|x|\rightarrow\infty \text{ or } |v|\rightarrow\infty}f(x, v, t)=0.
\end{align*}
Thus integrating the Vlasov equation over $\R^2\times[0, t]$ with respect to $(x, v, t)$ gives
\begin{equation}
\label{e:f-bound-L1}
\int\!\!\int f(x, v, t)dxdv=\int\!\!\int f_0(x, v)dxdv\leq M_0,
\end{equation}
where we have used \eqref{e:initial-condition}. Moreover, integration over $\R$ with respect to $v$ also gives
\begin{equation}\label{e:int-f-equation}
\left(\int fdv\right)_t+\left(\int vfdv\right)_x=0.
\end{equation}
Define control function
\begin{align*}
\psi(x, t)=\int_x^\infty\!\!\int f(y, v, t) dvdy,
\end{align*}
then using \eqref{e:int-f-equation} one has
\begin{align*}
\psi_t&=\int_x^\infty\!\!\left(\int f(y, v, t) dv\right)_tdy=-\int_x^\infty\!\!\left(\int vf(y, v, t) dv\right)_ydy=\!\int fvdv, \\
\psi_x&=-\int f dv, \quad \psi_{xx}=-\left(\int fdv\right)_x.
\end{align*}
Thus one is also able to derive the equation for $u-\psi$
\begin{align*}
(u-\psi)_t+u(u-\psi)_x=&\eps u_{xx}+\eps\left(\!\int fdv\right)_x+\int fvdv-u\int f dv\\
&-\psi_t-u\psi_x\\
=&\eps(u-\psi)_{xx}.
\end{align*}
Applying maximum principle for the above parabolic equation with respect to $u-\psi$, we obtain
\begin{align*}
\|u-\psi\|_{L^\infty}\leq \|u_0-\psi(x, 0)\|_{L^\infty}\leq M_0+\int\!\!\int f_0dvdx\leq 2M_0.
\end{align*}
Thus, we have
\begin{equation}
\label{e:u-bound}
\|u(x, t)\|_{L^\infty}\leq\|\psi\|_{L^\infty}+\|u-\psi\|_{L^\infty} \leq 3M_0
\end{equation}
by using \eqref{e:f-bound-L1}, which also close our priori assumption
$$\|u\|_{L^\infty}\leq 3M_0< \hat{C}(\eps, M_0, T).$$
 Besides, similar to the calculations of \eqref{e:v-bound} and \eqref{e:x-bound}, one has
\begin{equation}\label{e:xv-bound}
|v|\leq\eps^{-1/6}+3M_0, \quad {|x|\leq\eps^{-1/6}+(\eps^{-1/6}+2M_0)Te^T.}
\end{equation}

Furthermore, multiplying the Vlasov equation by $v^2$  and integrating over $\R^2$ with respect to $(v, x)$ we have
\begin{align*}
\frac{d}{dt}\int\!\!\int f v^2dvdx&=\int\!\!\int (2f vu-2fv^2)dvdx\leq \int\!\!\int f (u^2-v^2)dvdx\\
&\leq \|u\|^2_{L^\infty}\int\!\!\int f dvdx-\int\!\!\int f v^2dvdx\\
&\leq 9M_0^3-\int\!\!\int f v^2dvdx,
\end{align*}
where we have used \eqref{e:f-bound-L1}. Gronwall inequality yields
\begin{equation}\label{e:f-bound-v2L1}
\int\!\!\int f v^2dvdx\leq e^{-t}\int\!\!\int f_0v^2dvdx+9M_0^3(1-e^{-t})\leq M_0+9M_0^3.
\end{equation}
thus \eqref{e:f-bound-L1}, \eqref{e:u-bound} and \eqref{e:f-bound-v2L1}  conclude the present Lemma.
\end{proof}

\subsection{Conclusion.} Standard theory of quasilinear parabolic equation (see \cite{LSU}) can be applied to the equation for $u-\psi$
 $$(u-\psi)_t+u(u-\psi)_x=\eps(u-\psi)_{xx}$$
to get
$$\|(u-\psi)_x\|_{C^0(\R)}\leq C(\eps, T),$$
where we have used the uniform bound on $\|u\|_{C^0(\R)}\leq C$ and $C(\eps, T)$ is increasing function of $T.$ So we have
$$\|u_x\|_{C^0(\R)}\leq C(\eps, T).$$
Then using the Vlasov equation, we can also get $$\|f_x\|_{C^0(\R^2)}+\|f_v\|_{C^0(\R^2)}\leq C(\eps, T).$$
By standard bootstrap argument we have the following estimate
$$\|u(x, t)\|_{C^{2+\sigma, 1+\frac{\sigma}{2}}(\R\times[0, T])}+\|f(x, v, t)\|_{C^{2+\sigma}(\R^2\times[0, T])}\leq C(\eps, T),$$
With the local existence in Subsection 2.1, the global existence of solution $u(x, t)\in C^{2+\sigma, 1+\frac{\sigma}{2}}(\R\times[0, T])$ and $0\leq f(x, v, t)\in C^{2+\sigma}(\R^2\times[0, T])$ to the Cauchy problem \eqref{e:burgers-vlasov-viscosity}-\eqref{e:initial-viscosity} is obtained. Thus we got the following conclusion.
\begin{theorem}
For any $T>0,$ any fixed $\eps,$ there exist a unique global solution $u^\eps(x, t)\in C^{2+\sigma, 1+\frac{\sigma}{2}}(\R\times[0, T])$, $0\leq f^\eps(x, v, t)\in C^{2+\sigma}(\R^2\times[0, T])$ to Cauchy problem \eqref{e:burgers-vlasov-viscosity}-\eqref{e:initial-viscosity}.
\end{theorem}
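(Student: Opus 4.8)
The plan is to obtain the global solution by combining the local existence of Lemma~\ref{l:constraction} with the time-uniform bounds of Lemma~\ref{l:uniform} through a continuation argument. First I would invoke Lemma~\ref{l:constraction} to produce, for the fixed $\eps$, a unique smooth solution $(u, f)\in\mathcal{S}(t_0)$ on a short interval $[0, t_0]$, and note that the same contraction scheme applies verbatim when restarted from any smooth, compactly supported state, yielding a local existence time that depends only on the size of that state in the relevant H\"older norms. The danger is that this local time could shrink to zero along successive restarts; to rule this out I must produce a priori bounds on those norms that remain finite on every finite interval $[0, T]$.

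The engine for these bounds is the time-independent control from Lemma~\ref{l:uniform}, namely $\|u\|_{L^\infty}\leq 3M_0$ together with the kinetic-energy estimate $\int\!\!\int f(1+v^2)\,dvdx\leq M_0+9M_0^3$. Building on this, I would bootstrap as follows. The shifted quantity $u-\psi$, with $\psi(x,t)=\int_x^\infty\!\!\int f\,dvdy$, satisfies the quasilinear parabolic equation $(u-\psi)_t+u(u-\psi)_x=\eps(u-\psi)_{xx}$ with bounded drift $u$; standard quasilinear parabolic theory (\cite{LSU}) then yields a gradient estimate $\|(u-\psi)_x\|_{C^0}\leq C(\eps, T)$, hence $\|u_x\|_{C^0}\leq C(\eps, T)$, with $C(\eps, T)$ finite and increasing in $T$. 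Feeding this velocity-gradient bound into the characteristic representation \eqref{e:f-implicit}--\eqref{e:v-solution} of the transport equation, I would differentiate along $(X, V)$ exactly as in the local step to obtain $\|f_x\|_{C^0}+\|f_v\|_{C^0}\leq C(\eps, T)$.

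A further round of Schauder/bootstrap estimates, alternating between the parabolic equation for $u$ and the transport equation for $f$, upgrades these first-order bounds to the full H\"older norms $\|u\|_{C^{2+\sigma, 1+\frac{\sigma}{2}}(\R\times[0, T])}+\|f\|_{C^{2+\sigma}(\R^2\times[0, T])}\leq C(\eps, T)$. Since $C(\eps, T)<\infty$ for every finite $T$, the local existence time at any restart point is bounded below by $t_0\bigl(C(\eps, T)\bigr)>0$; consequently, were the maximal existence time $T^*<\infty$, the solution could be continued past $T^*$ by a step of this fixed size, a contradiction. This yields global existence, and uniqueness follows by concatenating the local uniqueness of Lemma~\ref{l:constraction} across successive intervals.

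The main obstacle I expect is the bridge between the two scales of estimate: Lemma~\ref{l:uniform} controls only $L^\infty$ and weighted $L^1$ quantities uniformly in time, whereas continuation requires H\"older bounds on derivatives. The delicate point is therefore the first gradient estimate $\|u_x\|_{C^0}\leq C(\eps, T)$ coming from parabolic regularity---this is precisely where the artificial viscosity $\eps u_{xx}$ is indispensable---together with checking that $\psi$ is regular enough (which rests on the compact support and smoothness of $f$ established above) for the quasilinear theory to be applicable.
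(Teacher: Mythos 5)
Your proposal follows essentially the same route as the paper: the time-uniform $L^\infty$ and kinetic-energy bounds of Lemma \ref{l:uniform}, parabolic regularity for the shifted quantity $u-\psi$ via \cite{LSU} to get $\|u_x\|_{C^0}\leq C(\eps,T)$, the transport equation for the derivatives of $f$, a Schauder bootstrap to the full H\"older norms, and then continuation from the local existence of Lemma \ref{l:constraction}. The only difference is that you spell out the restart/continuation argument more explicitly than the paper does; the substance is identical.
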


\section{Proof of main theorem }\label{s:proof}

In this section, we will establish the convergence of $(u^\eps, f^\eps),$ whose limit is just an $L^\infty-L^1$ weak solution to Cauchy problem \eqref{e:burgers-vlasov}-\eqref{e:initial}.

\subsection{Limit of functions.}
We first show some convergence results related to $f^\eps.$ Recall the well-known weak $L^1$ compactness framework, i.e. Dunford-Pettis  Theorem (see \cite{LMR} Theorem 8 or \cite{B}, page 167 ).
\begin{proposition}{\bf [Dunford-Pettis]}\label{p:compactness-1}
A sequence $\{f^\eps\}$ is weakly compact in $L^1(\R^2)$ if and only if  $\{f^\eps\}$  satisfies the following conditions:
\begin{itemize}
\item[(1)] The sequence $f^\eps$ is equibounded in $L^1(\R^2)$, i.e.
\begin{align*}
\sup_{\eps}\|f^\eps\|_{L^1(\R^2)}<\infty.
\end{align*}
\item[(2)] The sequence $f^\eps$ is equiintegrable, i.e.
\begin{itemize}
\item[(2a)] For any $\delta>0,$ there exists measurable set $A\subset\R^2$ with $|A|<\infty $ such that
\begin{align*}
\int_{\R^2\setminus A}f^\eps dxdv<\delta.
\end{align*}
\item[(2b)]  For any $\delta>0,$ there exists $\kappa>0$ such that for any measurable set $E\subset\R^2,$ with $|E|\leq\kappa,$ there holds
\begin{align*}
\int_Ef^\eps dxdv<\delta.
\end{align*}
\end{itemize}
\end{itemize}
\end{proposition}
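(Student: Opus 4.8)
The plan is to prove the characterization as an equivalence, splitting into the two implications and treating sufficiency as the substantive half. Throughout I would invoke the Eberlein--\v{S}mulian theorem to replace "weakly compact" by "relatively weakly sequentially compact," so that both directions reduce to statements about sequences: necessity becomes "weak sequential compactness forces (1)--(2)," and sufficiency becomes "(1)--(2) permit extraction of a weakly convergent subsequence from every sequence drawn from $\{f^\eps\}$."

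For necessity, suppose $\{f^\eps\}$ is relatively weakly compact. Equiboundedness (1) is immediate, since a weakly compact subset of a Banach space is norm bounded (the uniform boundedness principle applied to the $f^\eps$ as functionals on $L^\infty$). Equiintegrability I would prove by contradiction through a gliding--hump argument: if either (2a) or (2b) failed, I would extract a subsequence together with pairwise disjoint measurable sets $E_k$ (escaping to infinity when (2a) fails, or of vanishing measure when (2b) fails) on which $\int_{E_k}|f^{\eps_k}|\,dxdv\geq\delta>0$; testing against the $L^\infty$ function carrying suitable alternating signs on the $E_k$ then contradicts the weak--Cauchy property of the subsequence.

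For sufficiency I would show that any sequence from $\{f^\eps\}$ has a weakly convergent subsequence, by truncation combined with the reflexivity of $L^2$. Denote by $C_0$ the finite bound from (1) and write $f^\eps=f^\eps\,\mathbf{1}_{B_R\cap\{|f^\eps|\le N\}}+g^\eps_{R,N}$, where $B_R$ is the ball of radius $R$. The remainder splits into a spatial tail, controlled uniformly by tightness (2a), and a large--value part supported on $\{|f^\eps|>N\}$; since Chebyshev's inequality and (1) give $|\{|f^\eps|>N\}|\le C_0/N$, condition (2b) forces $\int_{\{|f^\eps|>N\}}|f^\eps|\,dxdv$ to be uniformly small once $N$ is large, so $\|g^\eps_{R,N}\|_{L^1(\R^2)}\to0$ uniformly in $\eps$ as $R,N\to\infty$. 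Each truncated piece is bounded by $N$ on the finite--measure set $B_R$, hence bounded in $L^2(B_R)$; by reflexivity of $L^2$ and a diagonal extraction over $R,N\in\mathbb{N}$ I obtain a single subsequence along which every truncation converges weakly in $L^2(B_R)$, and therefore weakly in $L^1(B_R)$ since $L^\infty(B_R)\subset L^2(B_R)$. Testing against an arbitrary $\phi\in L^\infty(\R^2)$ and using the uniform $L^1$--smallness of the remainders then shows the subsequence is weakly Cauchy in $L^1(\R^2)$; because $L^1$ is weakly sequentially complete, it converges weakly to some $f\in L^1(\R^2)$.

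The main obstacle is precisely this sufficiency direction, and it is structural: $L^1(\R^2)$ is not reflexive, so the boundedness in (1) alone never suffices, because a bounded sequence can lose mass either by concentrating into a singular Dirac--type limit or by letting mass escape to infinity. Conditions (2b) and (2a) are designed to rule out exactly these two pathologies, and the delicate point is to quantify both failure modes \emph{uniformly} in $\eps$ so that the truncation remainders $g^\eps_{R,N}$ vanish uniformly. The final passage from a weakly Cauchy sequence to an honest weak limit rests on the weak sequential completeness of $L^1$, which is the one nonelementary ingredient I would take for granted.
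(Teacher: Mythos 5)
The paper does not prove this proposition at all: it is recalled as the classical Dunford--Pettis theorem and simply cited (Theorem~8 of the Labrunie--Marchal--Roche reference, or Beauzamy's book), so there is no in-paper argument to compare against. Your proposal is a correct rendering of the standard textbook proof, and the sufficiency half in particular is exactly the classical route: truncate in space and in amplitude, use Chebyshev plus (2b) to make the large-value remainder uniformly small in $L^1$, use reflexivity of $L^2$ on the truncations, and finish with weak sequential completeness of $L^1$. Two small points you gloss over. First, in the spatial-tail step, condition (2a) only provides a set $A$ of finite measure, not a ball; to control $\int_{B_R^c}f^\eps$ you must also invoke (2b) on $B_R^c\cap A$ (whose measure tends to $0$ as $R\to\infty$), or else carry out the truncation on $A$ itself rather than on balls. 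Second, in the necessity direction the extraction of \emph{pairwise disjoint} sets $E_k$ carrying mass at least $\delta$ is the genuinely delicate step (it is essentially Rosenthal's lemma, or one replaces the whole gliding hump by Vitali--Hahn--Saks applied to the setwise-convergent measures $E\mapsto\int_E f^{\eps_k}$); as written, the sets witnessing the failure of (2b) need not be disjoint and the disjointification cannot itself appeal to uniform integrability. Neither issue is a wrong turn, but both deserve a sentence if the proof is to be written out. Note also that for the paper's application only the sufficiency direction and the nonnegative case are ever used.
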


We shall verify (1) and (2) for $f^\eps(x, v, t)$ with any fixed $t>0.$

\textit{ Verification of (1):} Obviously, from Lemma \ref{l:uniform},
one finds that
\begin{equation}\label{e:f-bound-weight}
\|f^\eps\|_{L^\infty([0, T], (1+v^2)L^1(\R^2))}\leq M,
\end{equation}
which means $f^\eps(\cdot, \cdot, t)$ is uniformly equibounded with respect to $\eps$ and $t$ in $L^1(\R^2).$

\textit{ Verification of (2a):} For any $\delta>0,$ we can choose $A=\{(x, v)|  |x|\leq \Lambda, |v|\leq \Lambda\}$ with $\Lambda\geq\frac{M}{\delta}$ where $M$ comes from \eqref{e:f-bound-weight}, so we have for any $t>0,$
\begin{align*}
\int_{\R^2\setminus A}f^\eps(x, v, t)dxdv\leq\frac{1}{\Lambda^2+1}\int_{\R^2\setminus A} f^\eps(x, v, t)(1+v^2) dxdv\leq\delta,
\end{align*}
which implies that  (2a) is satisfied by $f^\eps(x, v, t).$

\textit{Verification of (2b):} By the fact that  $f^\eps_0\rightharpoonup f_0$ in $L^1(\R^2),$  Dunford-Pettis theorem tells us that for any $\delta,$ there exists $\kappa_0$ such that for any $E_0\subset\R^2$ with $|E_0|\leq \kappa_0$  it holds that
\begin{align*}
\int_{E_0}f^\eps_0(x, v)dxdv\leq\delta.
\end{align*}
On the other hand, considering the variable transformation
$$\mathcal{J}:(x, v)\mapsto(X^\eps, V^\eps),$$
from \eqref{e:x-equation} and \eqref{e:v-equation}, one is able to show that the Jacobian
$$J(\tau)=\det \nabla_{x, v}(X^\eps, V^\eps)$$
of map $\mathcal{J}$ is positive and satisfies the following ODE
\begin{equation*}
\left\{
\begin{array}{ll}
\displaystyle \frac{d J(\tau)}{d\tau}=-J(\tau),\\
\displaystyle J(t)=1.
\end{array}
\right.
\end{equation*}
Then we have $J(\tau)= e^{t-\tau}.$ For any $T\in(0, \infty)$, one can take $\kappa=e^{-T}\kappa_0.$ Then for any measurable set $E\subset\R^2$ with $|E|\leq \kappa,$ one has $|\mathcal{J}(0)(E)|\leq e^{t}e^{-T}\kappa_0\leq\kappa_0$ for any $t\in[0, T].$ Hence one further has
\begin{align*}
\int_{\mathcal{J}(0)(E)}f^\eps_0(x, v)dxdv\leq\delta.
\end{align*}
Finally, with \eqref{e:f-implicit}, one gains
\begin{align*}
\int_Ef^\eps(x, v, t)dxdv&=\int_{\mathcal{J}(0)(E)}f^\eps_{0}(X^\eps(0; x,v,t), V^\eps(0; x,v,t))e^t J(0)^{-1}dX^\eps dV^\eps\\
&=\int_{\mathcal{J}(0)(E)}f^\eps_{0}(X^\eps(0; x,v,t), V^\eps(0; x,v,t))dX^\eps dV^\eps\leq\delta,
\end{align*}
which then gives (2b).

Therefore, applying Proposition \ref{p:compactness-1}, we get some subsequence (for simplicity we still denote) $f^\eps$ and a nonnegative function $f\in L^{\infty}([0, T], L^1(\R^2))$ such that
\begin{equation}\label{e:f-weak-limit}
f^{\eps}(x, v, t)\rightharpoonup f(x, v, t)\text{ weakly in } L^1(\R^2), \text{ for any } t>0
\end{equation}
and
\begin{equation}\label{e:f-limit-1}
\int\!\!\int\!\! fdxdv\leq M.
\end{equation}

Next we shall show the convergence of $\int vf^\eps dv$. In fact, using the convexity of kinetic energy and weak convergence of $f^\eps,$ one is able to get from \eqref{e:f-bound-v2L1} that
\begin{equation}\label{e:f-limit-2}
\int\!\!\int f v^2dvdx\leq \liminf_{\eps\rightarrow0}\int\!\!\int f^\eps v^2dvdx\leq  M.
\end{equation}

For the weak convergence of $\int f^\eps vdv,$ we utilize the approach in \cite{ACF}. Denote $\mathbf{1}_{[-1, 1]}(s)$ as $\omega(s),$ for any $\va\in C_c^\infty(\R\times[0, T]),$ for arbitrary $L>0,$ one can derive
\begin{align*}
&\int_0^T\!\!\!\int\left(\int vf^\eps dv-\int vfdv\right)\va dxdt\\
=&\int_0^T\!\!\int\left(\int \omega(\frac{v}{L})vf^\eps dv-\int \omega(\frac{v}{L})vfdv\right)\va dxdt\\
&+\int_0^T\!\!\int\!\!\int vf^\eps (1-\omega(\frac{v}{L}))dv\va dxdt\\
&-\int_0^T\!\!\int \!\!\int vf(1-\omega(\frac{v}{L}))dv\va dxdt.
\end{align*}
By (\ref{e:f-weak-limit}), i.e. the weak convergence of $f^\eps$ in $L^1$ , one can get that  the first term on the right hand side converges to 0 as $\eps\rightarrow0.$  For the last two terms, using \eqref{e:f-bound-v2L1} and \eqref{e:f-limit-2} we also has
\begin{align*}
&\left|\int_0^T\!\!\int\!\!\int vf^\eps (1-\omega(\frac{v}{L}))dv\va dxdt\right|\leq\frac{\|\va\|_{L^\infty}}{L}\int_0^T\!\!\!\int\!\!\int f^\eps v^2 dvdxdt\leq {\frac{MT\|\va\|_{L^\infty}}{L}},\\
&\left|\int_0^T\!\!\int \!\!\int vf(1-\omega(\frac{v}{L}))dv\va dxdt\right|\leq\frac{\|\va\|_{L^\infty}}{L}\int_0^T\!\!\!\int\!\!\int f v^2 dvdxdt\leq {\frac{MT\|\va\|_{L^\infty}}{L}},
\end{align*}
both of which go to 0 when $L\rightarrow\infty.$ Therefore, we have
\begin{equation}\label{e:fv-weak-limit}
\int f^\eps vdv\rightarrow \int vfdv \text{ in the sense of distribution.}
\end{equation}
Similarly, we also have
\begin{equation}\label{e:int-f-limit}
\int f^\eps dv\rightarrow \int fdv \text{ in the sense of distribution.}
\end{equation}

Now, we consider the convergence of $u^\eps$. We also recall the $L^\infty$ compensated compactness framework and Murat's Lemma:
\begin{proposition}\label{p:compactness}(\cite{Chen00})
Assume that a sequence $u^\eps(x, t)$ satisfies
\begin{align*}
\|u^\eps\|_{L^\infty}\leq C,
\end{align*}
and
\begin{align*}
\eta(u^\eps)_t+q(u^\eps)_x \text{ is compact in }H^{-1}_{loc}(\mathbb{R}\times[0, T])
\end{align*}
for any entropy pair $(\eta, q)$ with $\eta'(u)u=q'(u)$ (or two special entropy pairs in Theorem 2.7 of \cite{Chen00}). Then there exist
a subsequence $\{u^{\eps_k}\}_{k=1}^\infty\subset\{u^\eps\}_{\eps>0}$ and function $u(x, t)$ such that
\begin{align*}
u^{\eps_{k}}\rightarrow u, \quad (u^{\eps_{k}})^2\rightarrow u^2, \text{a.e. as } k\rightarrow\infty.
\end{align*}
\end{proposition}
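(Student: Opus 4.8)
The plan is to prove this reduction by the classical route of Tartar and DiPerna: Young measures combined with the div--curl lemma. First I would invoke the fundamental theorem of Young measures. Since $\|u^\eps\|_{L^\infty}\leq C$, after passing to a subsequence there is a family of probability measures $\{\nu_{x,t}\}$, all supported in the fixed compact interval $[-C,C]$, such that for every $g\in C(\R)$ one has $g(u^\eps)\rightharpoonup \langle\nu_{x,t},g\rangle$ weakly-$*$ in $L^\infty(\R\times[0,T])$. The whole problem then reduces to showing that $\nu_{x,t}$ is a Dirac mass $\delta_{u(x,t)}$ for almost every $(x,t)$; from this the a.e. convergence of $u^\eps$ and of $(u^\eps)^2$ will follow, as explained at the end.

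The next step is to extract Tartar's commutation relation from the $H^{-1}_{loc}$-compactness hypothesis. Given two entropy pairs $(\eta_1,q_1)$ and $(\eta_2,q_2)$, I would form, in the $(t,x)$-plane, the fields $a^\eps=(\eta_1(u^\eps),q_1(u^\eps))$ and $b^\eps=(q_2(u^\eps),-\eta_2(u^\eps))$. These are bounded in $L^\infty\subset L^2_{loc}$, while $\operatorname{div}a^\eps=\partial_t\eta_1(u^\eps)+\partial_x q_1(u^\eps)$ and $\operatorname{curl}b^\eps=-(\partial_t\eta_2(u^\eps)+\partial_x q_2(u^\eps))$ are precompact in $H^{-1}_{loc}$ by hypothesis. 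The div--curl lemma then yields $a^\eps\cdot b^\eps\rightharpoonup \overline{a}\cdot\overline{b}$, that is,
\begin{equation*}
\langle\nu,\eta_1 q_2-q_1\eta_2\rangle=\langle\nu,\eta_1\rangle\langle\nu,q_2\rangle-\langle\nu,q_1\rangle\langle\nu,\eta_2\rangle
\end{equation*}
for almost every $(x,t)$ and every pair of entropy pairs.

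The final and hardest step is the reduction of $\nu$ to a point mass, where the structure of the Burgers flux $f(u)=\tfrac12 u^2$ (so that $q'=\eta'u$) enters decisively. Feeding into the commutation relation the one-parameter family of entropy pairs generated by convex $\eta$ — or, as the statement permits, just the two special pairs of Theorem 2.7 of \cite{Chen00} — produces constraints on the moments of $\nu_{x,t}$ that, because $f'(u)=u$ is strictly monotone and hence constant on no nontrivial interval, force the support of $\nu_{x,t}$ to collapse to a single point $u(x,t)$. This is precisely Tartar's reduction theorem for genuinely nonlinear scalar conservation laws, and checking that the strict convexity genuinely rules out every non-Dirac $\nu$ is the main obstacle; everything upstream is soft functional analysis.

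Once $\nu_{x,t}=\delta_{u(x,t)}$ a.e., the weak-$*$ limit identity gives $g(u^\eps)\rightharpoonup g(u)$ for all $g\in C(\R)$; in particular $u^\eps\rightharpoonup u$ and $(u^\eps)^2\rightharpoonup u^2$ in $L^2_{loc}$. Because the weak limit of $(u^\eps)^2$ equals the square of the weak limit of $u^\eps$, the $L^2_{loc}$ norms converge, which upgrades weak to strong convergence; hence $u^\eps\to u$ and $(u^\eps)^2\to u^2$ almost everywhere along a further subsequence, as claimed.
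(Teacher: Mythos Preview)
The paper does not supply its own proof of this proposition: it is merely recalled from \cite{Chen00} (and ultimately from Tartar \cite{Ta79}) as an established compactness framework, and then applied directly to the approximate sequence $u^\eps$. Your proposal is a correct outline of exactly the classical Tartar--DiPerna argument that underlies the cited result --- Young measures from the uniform $L^\infty$ bound, the div--curl lemma to obtain the commutation relation, and Tartar's reduction to a Dirac mass using the genuine nonlinearity of the Burgers flux --- so there is nothing to compare against and no gap to flag.
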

\begin{lemma}\label{l:murat} (\cite{Chen86, Ta79})
Let $\Omega\in\mathbb{R}^n$ be a open bounded subset, then
\begin{align*}
(\text{compact set of } W^{-1,a}_{loc}(\Omega) )\cap
(\text{bounded set of } W_{loc}^{-1,b}(\Omega))
\subset (\text{compact set of }H^{-1}_{loc}(\Omega)),
\end{align*}
where $a$ and $b$ are constants satisfying $1<a\leq2<b.$
\end{lemma}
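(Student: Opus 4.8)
The statement is the classical interpolation lemma behind the div--curl type compactness: since $1<a\le 2<b$, the space $H^{-1}=W^{-1,2}$ lies between $W^{-1,a}$ and $W^{-1,b}$ on the Sobolev interpolation scale, and the asserted inclusion is nothing but the fact that a sequence converging in the larger space $W^{-1,a}$ while staying bounded in the smaller space $W^{-1,b}$ must converge in the intermediate space $H^{-1}$. The plan is to make this precise by localizing, proving one multiplicative interpolation inequality, and then extracting subsequences.

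First I would localize. All three hypotheses are local, so it suffices to fix an arbitrary $\Omega'\Subset\Omega$ and prove the inclusion with every space replaced by its counterpart on $\Omega'$; a diagonal argument over an exhaustion $\Omega'_1\Subset\Omega'_2\Subset\cdots$ of $\Omega$ then recovers the $H^{-1}_{loc}$ conclusion. Choosing a cutoff $\chi\in C_c^\infty(\Omega)$ with $\chi\equiv1$ on $\Omega'$ and using that multiplication by $\chi$ is bounded on $W^{-1,p}$ for each fixed $1<p<\infty$, I may transfer the estimates to compactly supported distributions on $\R^n$ and thereby avoid all boundary issues.

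The crux is the interpolation inequality. Because $\tfrac1a\ge\tfrac12\ge\tfrac1b$, there is a unique $\theta\in[0,1]$ with $\tfrac12=\tfrac{1-\theta}{a}+\tfrac{\theta}{b}$, and I claim
\[
\|g\|_{H^{-1}}\le C\,\|g\|_{W^{-1,a}}^{\,1-\theta}\,\|g\|_{W^{-1,b}}^{\,\theta},\qquad C=C(a,b,\Omega').
\]
I would prove this by identifying $W^{-1,p}$ (for each fixed $p\in(1,\infty)$, with $p$-dependent but harmless constants) with the Bessel-potential space $(I-\Delta)^{-1/2}L^p$: writing $g=(I-\Delta)^{-1/2}G$ with $G\in L^p$ and $\|g\|_{W^{-1,p}}\simeq\|G\|_{L^p}$, the estimate reduces to the log-convexity of Lebesgue norms $\|G\|_{L^2}\le\|G\|_{L^a}^{1-\theta}\|G\|_{L^b}^{\theta}$, which is immediate from H\"older. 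Equivalently one may simply quote complex interpolation $[W^{-1,a},W^{-1,b}]_\theta=W^{-1,2}$.

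With the inequality in hand the conclusion is routine. Let $S$ be precompact in $W^{-1,a}(\Omega')$ and bounded in $W^{-1,b}(\Omega')$, and take any sequence $\{f_n\}\subset S$. Precompactness yields a subsequence with $f_n\to f$ in $W^{-1,a}(\Omega')$, and the $W^{-1,b}$-bound passes to the limit (by reflexivity the distributional limit $f$ lies in $W^{-1,b}$ with controlled norm), so $\sup_n\|f_n-f\|_{W^{-1,b}}<\infty$. Applying the inequality to $f_n-f$ gives
\[
\|f_n-f\|_{H^{-1}}\le C\,\|f_n-f\|_{W^{-1,a}}^{\,1-\theta}\,\|f_n-f\|_{W^{-1,b}}^{\,\theta}\to0,
\]
since the first factor vanishes and the second stays bounded. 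Hence every sequence in $S$ has an $H^{-1}(\Omega')$-convergent subsequence, which is exactly precompactness in $H^{-1}(\Omega')$. I expect the only genuinely delicate point to be the interpolation inequality with an $n$-independent constant on a bounded domain; I would handle it either through the duality $W^{-1,p}=(W_0^{1,p'})^*$ and interpolation of the dual scale, or through the localization to $\R^n$ above, where the constant depends only on the fixed exponents $a$ and $b$. Everything else is standard.
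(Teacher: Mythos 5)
The paper does not prove this lemma at all: it is quoted as a known tool with the citations \cite{Chen86, Ta79}, so there is no in-paper argument to compare against. Your proof is the standard interpolation proof of Murat's lemma found in those references, and it is correct: the localization by a cutoff $\chi$ (using that multiplication by $\chi$ is bounded on $W^{-1,p}$ by duality), the multiplicative inequality $\|g\|_{H^{-1}}\le C\|g\|_{W^{-1,a}}^{1-\theta}\|g\|_{W^{-1,b}}^{\theta}$ with $\tfrac12=\tfrac{1-\theta}{a}+\tfrac{\theta}{b}$, and the subsequence extraction (with the limit placed in $W^{-1,b}$ by weak-$*$ compactness) assemble into a complete argument. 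The one slip is the sign in the Bessel-potential identification: $W^{-1,p}(\R^n)$ is $(I-\Delta)^{1/2}L^p$, i.e.\ one should write $g=(I-\Delta)^{1/2}G$ with $G=(I-\Delta)^{-1/2}g\in L^p$ and $\|g\|_{W^{-1,p}}\simeq\|G\|_{L^p}$; the space $(I-\Delta)^{-1/2}L^p$ you wrote is $W^{1,p}$. With that corrected, the same representative $G$ serves for all three exponents and the inequality reduces, exactly as you say, to the log-convexity $\|G\|_{L^2}\le\|G\|_{L^a}^{1-\theta}\|G\|_{L^b}^{\theta}$.
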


From uniform estimate \eqref{e:u-bound}, there exists a $u(x, t)\in L^\infty(\R\times[0, T])$ such that
\begin{equation}\label{e:u-limit}
\|u\|_{L^\infty(\R\times[0, T])}\leq M \text{ and } u^\eps(x, t)\rightharpoonup u(x, t), \text{ weak * in } L^\infty(\R\times[0, T]).
\end{equation}
To prove the strong convergence of $u^\eps$, we need to get entropy dissipation estimate. For any compact set $\Omega\subset\R\times[0, T],$ for any $\va\in C^{\infty}_c(\R\times[0, T])$ with $\va|_\Omega=1,$ multiplying the first equation in \eqref{e:burgers-vlasov-viscosity} by $u^\eps\va^2$,  integrating over $\R\times[0, T]$, one has
\begin{align*}
\int_0^T\!\!\int\left[\frac{1}{2}(u^\eps)^2_t\va^2+\frac{1}{3}(u^\eps)^3_x\va^2 \right]dxdt=&\int_0^T\!\!\int\left[\eps u^\eps_{xx}u^\eps\va^2+\eps\left(\!\int f^\eps dv\right)_xu^\eps\va^2 \right]dxdt\\
&+\int_0^T\!\!\int\left[u^\eps\va^2\int f^\eps vdv-\va^2 (u^\eps)^2\int f^\eps dv \right]dxdt.
\end{align*}
Integrating by parts gives
\begin{align*}
\int_0^T\!\!\int\eps\va^2 (u_x^\eps)^2dxdt=
&\int_0^T\!\!\int\left[(u^\eps)^2\va\va_t+\frac{2}{3}(u^\eps)^3\va\va_x-2\eps \va\va_x u^\eps\!\!\int f^\eps dv\right]dxdt\\
&+\int_0^T\!\!\int\left[u^\eps\va^2\int f^\eps vdv-\va^2 (u^\eps)^2\int f^\eps dv \right]dxdt\\
&-\int_0^T\!\!\int\left[2\eps\va\va_xu^\eps u^\eps_x+\eps\va^2 u^\eps_x\!\int f^\eps dv\right] dxdt.
\end{align*}
From Lemma \ref{l:uniform}, one is easy to get the first two terms are bounded.  Applying H\"older inequality and \eqref{e:f-bound-weight}, the last term is bounded by
\begin{align*}
\frac{1}{2}\int_0^T\!\!\int_\Omega\eps (u^\eps_x)^2 dxdt+ M\eps\int_0^T\!\!\int\!\va^2\left(\int f^\eps dv\right)^2 dxdt+ C(M_0, T).
\end{align*}
From \eqref{e:initial-viscosity} and \eqref{e:f-implicit}, one has
$$\|f\|_{L^\infty}\leq e^t\|f_0^\eps\|_{L^\infty}\leq\eps^{-{1}/{6}}e^T.$$
By \eqref{e:xv-bound}, we further have
\begin{equation}\label{e:int-f-l2}
\eps\int_0^T\!\!\int\va^2\!\left(\int f^\eps dv\right)^2 dxdt\leq C(M_0, T)\eps^{1/3}.
\end{equation}
So we gain
\begin{equation}\label{e:u-derivative}
\int\!\!\int_\Omega\eps (u^\eps_x)^2 dxdt\leq C(M_0, T).
\end{equation}
Now we are ready to show the entropy dissipation of \eqref{e:burgers-vlasov-viscosity}.  For any weak entropy-entropy flux $(\eta, q)$ with $\eta\in C^2,$ one has
\begin{align*}
\eta(u^\eps)_t+q(u^\eps)_x=&\eps\eta(u^\eps)_{xx} -\eps\eta''(u^\eps)(u^\eps_x)^2+\eta'(u^\eps)\!\!\int f^\eps (v-u^\eps)dv\\
&\quad+\eps\eta'(u^\eps)\left(\int f^\eps dv\right)_x=:\sum_{k=1}^4I_k,
\end{align*}
which is got by multiplying \eqref{e:burgers-vlasov-viscosity} by $\eta'(u^\eps).$ Obviously, from the uniform boundedness of $u^\eps$ \eqref{e:u-bound}, using the estimates \eqref{e:f-bound-weight} and \eqref{e:u-derivative}, we obtain $I_2+I_3$ is bounded in $L^1_{loc}(\R\times[0, T])$. Thus by embedding theorem and Schauder theorem $I_2+I_3$ is compact in $W^{-1, \alpha}_{loc}(\R\times[0, T])$ with some $1<\alpha<2.$ For $I_1$, we also have
\begin{align*}
\left|\int_0^T\int\eps\eta(u^\eps)_{xx}\va dxdt\right|&=
\left|\!\!\int_0^T\!\!\int\eps\eta'(u^\eps)u^\eps_{x}\va_x dxdt\right|\\
&\leq M\sqrt{\eps}\left(\int_0^T\!\!\int_\Omega \eps (u^\eps_x)^2dxdt\right)^{1/2},
\end{align*}
for any  compact set $\Omega\subset\R\times[0, T],$ and any $\va\in C^{\infty}_c(\Omega).$ So we have $I_1$ is compact in $H^{-1}_{loc}(\R\times[0, T]).$ For $I_4,$ using \eqref{e:int-f-l2} and \eqref{e:u-derivative}, one also has
{\begin{align*}
&\left|\int_0^T\int\eps\eta'(u^\eps)\left(\int f^\eps dv\right)_x\va dxdt\right|\\
\leq&\left|\!\!\int_0^T\!\!\int\eps\eta'(u^\eps)\int f^\eps dv\va_x dxdt\right|
+\left|\!\!\int_0^T\!\!\int\eps\va\eta''(u^\eps)u^\eps_x\int f^\eps dvdxdt\right|\\
\leq&M\eps+M\left(\int_0^T\!\!\int\eps (u^\eps_x)^2dxdt\right)^{1/2}
\left(\eps\int_0^T\!\!\int\va^2 \left(\int f^\eps dv\right)^2dxdt\right)^{1/2}\\
\leq&M\eps^{1/6},
\end{align*}}
which also implies $I_4$ is compact in $H^{-1}_{loc}(\R\times[0, T]).$ Putting things together, one has
\begin{equation}
\label{e:eta-q-compact}
\eta(u^\eps)_t+q(u^\eps)_x \text{ is compact in } W^{-1, \alpha}_{loc}(\R\times[0, T]) \text{ for some } 1<\alpha<2.
\end{equation}
From the $L^\infty$ bound of $u^\eps,$  one also has
\begin{equation}
\label{e:eta-q-bound}
\eta(u^\eps)_t+q(u^\eps)_x \text{ is bounded in } W^{-1, \infty}_{loc}(\R\times[0, T]).
\end{equation}
Applying Murat's lemma (see Lemma \ref{l:murat}) to \eqref{e:eta-q-compact} and \eqref{e:eta-q-bound}, we finally have
\begin{equation}
\label{e:eta-q-h-compact}
\eta(u^\eps)_t+q(u^\eps)_x \text{ is compact in } H^{-1}_{loc}(\R\times[0, T]).
\end{equation}
Therefore, using $L^\infty $ compensated compactness framework (see Proposition \ref{p:compactness} or Theorem 2.7 in \cite{Chen00}), \eqref{e:u-limit} and \eqref{e:eta-q-h-compact}, we have
\begin{align}
&u^\eps(x, t)\rightarrow u(x, t),~~  \text{a.e. in } \R\times[0, T], \notag\\
&u^\eps(x, t)\rightarrow u(x, t), \text{ in } L^p_{loc}(\R\times[0, T]), \quad \forall p\in [1, \infty). \label{e:u-weak-limit}
\end{align}

\subsection{Limit of equations.}
To show $(u, f)$ is weak solution to \eqref{e:burgers-vlasov}, we need to verify \eqref{e:burgersweak} and \eqref{e:vlasovweak}. For simplicity, here we only show \eqref{e:burgersweak}, since \eqref{e:vlasovweak} can be treated similarly. Multiplying the first equation in \eqref{e:burgers-vlasov-viscosity} by $\phi\in C^\infty_c(\R\times[0, T))$, integrating over $\R\times[0, T]$ and using integration by parts, we obtain
\begin{align*}
&\int\phi(x, 0)u^\eps_0(x)dx+\int_0^T\!\!\!\int\left(u^\eps\phi_t+\frac{1}{2}(u^\eps)^2 \phi_x+\phi\int f^\eps(v-u^\eps)dv\right)dxdt\\
&\qquad +\eps\int_0^T\!\!\!\int \left(u^\eps_x\phi_x+\!\!\int f^\eps dv \phi_x \right)dxdt=0.
\end{align*}
For the last two term in the left hand side, using \eqref{e:u-derivative}, one can derive
\begin{align*}
\left|\eps\int_0^T\!\!\!\int u^\eps_x\phi_x dxdt\right|&\leq\sqrt{\eps}\left(\int_0^T\!\!\!\int\eps (u^\eps_x)^2dxdt\right)^{\frac{1}{2}}\left(\int_0^T\!\!\!\int\phi_x^2dxdt\right)^{\frac{1}{2}}\\
&\leq C(M_0, \|\phi\|_{H^1(\R)}, T)\sqrt{\eps},
\end{align*}
and
\begin{align*}
\left|\eps\int_0^T\!\!\!\int \phi_x\int fdv  dxdt\right|\leq C(M_0, T, \|\phi\|_{C^1(\R)})\eps,
\end{align*}
both of which go to 0 when $\eps\rightarrow0.$  It only remains to show the convergence of $ u^\eps\int f^\eps dv.$ In fact, observing that
\begin{align*}
&u^\eps\!\!\int f^\eps dv-u\!\!\int fdv\\
=(&u^\eps-u)\!\!\int f^\eps dv+\left(\int f^\eps dv-\int fdv\right)u,
\end{align*}
using \eqref{e:int-f-limit} and \eqref{e:u-weak-limit} we have
\begin{align*}
u^\eps\int f^\eps dv\rightarrow u\!\int fdv \text{ in the sense of distribution.}
\end{align*}
Thus we have \eqref{e:burgersweak}.


\subsection{Entropy inequality.}
We shall also show entropy inequality for Burgers equation, i.e. \eqref{e:entropy-inequality}. Multiply the first equation in \eqref{e:burgers-vlasov-viscosity} by $\eta'(u^\eps)\va$, where $\eta$ is convex and  $\va\in C^\infty_c(\R\times(0, T))$ is nonnegative, and integrate the result over $\R\times(0, T)$ we get
\begin{align*}
&\int_0^T\!\!\!\int_\R\left(\eta(u^\eps)\va_t+q(u^\eps)\va_x+\va\eta'(u^\eps)\int_\R f^\eps(v-u^\eps)dv\right)dxdt\\
=&-\int_0^T\!\!\!\int\left(\eps u^\eps_{xx}+\eps\left(\int f^\eps dv\right)_x\right)\eta'(u^\eps)\va dxdt\\
=&\eps\int_0^T\!\!\!\int u^\eps_x \eta'(u^\eps)\va_x dxdt+\eps \int_0^T\!\!\!\int \eta''(u^\eps)(u^\eps_x)^2\va dxdt\\
&\quad +\int_0^T\!\!\!\int \eps\va_x\eta'(u^\eps)\int f^\eps dv dxdt+\int_0^T\!\!\!\int\eps\eta''(u^\eps)u^\eps_x\va\int f^\eps dvdxdt\\
\geq& -C(\va)(\eps+{\eps}^{1/2}+\eps^{1/6}),
\end{align*}
where we have used \eqref{e:int-f-l2} and \eqref{e:u-derivative}.
We also have
\begin{align*}
&\eta'(u^\eps)\!\!\int vf^\eps dv-\eta'(u)\!\!\int vfdv\\
=&(\eta'(u^\eps)-\eta'(u))\!\!\int vf^\eps dv+\left(\int vf^\eps dv-\int vfdv\right)\eta'(u),
\end{align*}
and
\begin{align*}
&u^\eps\eta'(u^\eps)\!\!\int f^\eps dv-u\eta'(u)\!\!\int fdv\\
=&(u^\eps\eta'(u^\eps)-u\eta'(u))\!\!\int f^\eps dv+\left(\int f^\eps dv-\int fdv\right)u\eta'(u).
\end{align*}
Using \eqref{e:fv-weak-limit}, \eqref{e:int-f-limit} and \eqref{e:u-weak-limit} we get the entropy inequality \eqref{e:entropy-inequality} by letting $\eps\rightarrow0.$

 Finally, combining with \eqref{e:f-limit-1}, \eqref{e:f-limit-2}, \eqref{e:u-limit},  we complete the proof of Theorem \ref{t:bounded-weak}.

\section*{Acknowledgments}
Huimin Yu's  research is supported in part by the National Natural Science Foundation of China
(Grant No.11671237, 11501333), China Scholarship Council No. 201708370075. Wentao Cao's research is supported by ERC Grant Agreement No. 724298.
\bigskip

\end{document}